\def\nnewpage{}
\newtheorem{Thm}{Theorem}
\newtheorem{Lem}{Lemma}
\newtheorem{Coro}{Corollary}
\theoremstyle{definition}
\newtheorem{Algo}{Algorithm}
\newtheorem{Rem}{Remark}
\newtheorem{Example}{Example}
\newcommand{\comment}[1]{}
\def\indd#1{{\bf 1}_{\{#1\}}}
\newcommand{\proba}{\mathbb P}
\newcommand{\esp}{{\mathbb E}}
\newcommand{\inv}{^{-1}}
\newcommand{\var}{{\rm{Var}}}
\newcommand{\calB}{{\cal B}}
\newcommand{\filF}{{\cal F}}
\newcommand{\calL}{{\cal L}}
\newcommand{\calX}{{\cal X}}
\def\B{{\mathbb B}}
\def\G{{\mathbb G}}
\def\indn#1{\{#1_n\}_{n\in \mathbb N}}
\newcommand{\eqnh}{\begin{eqnarray*}}
\newcommand{\eqne}{\end{eqnarray*}}
\newcommand{\eqnhn}{\begin{eqnarray}}
\newcommand{\eqnen}{\end{eqnarray}}
\newcommand{\equh}{\begin{equation}}
\newcommand{\eque}{\end{equation}}
\def\summ#1#2#3{\sum_{#1 = #2}^{#3}}
\def\sif#1#2{\sum_{#1=#2}^\infty}
\newcommand{\widebar}{\overline}
\def\topp#1{^{(#1)}}
\def\nnTV#1{\left\|#1\right\|_{\rm TV}}
\def\snnTV#1{\|#1\|_{\rm TV}}
\def\abs#1{\left|#1\right|}
\def\ccbb#1{\left\{#1\right\}}
\def\spp#1{(#1)}
\def\pp#1{\left(#1\right)} 
\def\bbpp#1{\left(#1\right)} 
\def\bb#1{\left[#1\right]}
\def\sbb#1{[#1]}
\def\floor#1{\left\lfloor #1 \right\rfloor}
\def\d{{\rm d}}
\def\B{{\mathbb B}}
\def\mand{\mbox{ and }}
\def\qmand{\quad\mbox{ and }\quad}
\def\mwith{\mbox{ with }}
\def\wt#1{\widetilde{#1}}
\def\wb#1{\widebar{#1}}
\def\what#1{\widehat{#1}}
\def\weakto{\Rightarrow}
\def\R{{\mathbb R}}
\def\N{{\mathbb N}}
\def\rset{\mathbb R}
\newcommand{\eqdef}{\ensuremath{\stackrel{\mathrm{def}}{=}}}
\def\F{\mathcal{F}} 
\def\B{\mathcal{B}} 
\def\G{\mathbb{G}}
\def\PP{\mathbb{P}} 
\def\PE{\mathbb{E}} 
\newcounter{hypoconbis}
\newcounter{saveconbis}
\newcommand\debutA{
\begin{list} {\textbf{A\arabic{hypoconbis}}}{\usecounter{hypoconbis}}\setcounter{hypoconbis}{\value{saveconbis}}
}
\newcommand\finA{\end{list}\setcounter{saveconbis}{\value{hypoconbis}}}
\newlist{conditions}{enumerate}{1}
\setlist[conditions]{label={\bf A\arabic*},ref=(A\arabic*)}
\newcounter{AC}
\newcommand\assumpH
\newcommand\assumpE
\begin{document}\sloppy

\title{On the Convergence Rates of Some Adaptive Markov Chain Monte Carlo Algorithms}
\author{Yves Atchad\'e\thanks{
 Department of Statistics, the University of Michigan,
 439 West Hall, 1085 S.\ University, Ann Arbor, MI 48109--1107, USA.
  {\em Email:}
  \texttt{yvesa@umich.edu}.
  }\ \ and Yizao Wang\thanks{
  Department of Mathematical Sciences,
University of Cincinnati,
2815 Commons Way, ML$\#0025$,
Cincinnati, OH, 45221--0025, USA.
  {\em Email:}
  \texttt{yizao.wang@uc.edu}.
  }}
\maketitle

\begin{abstract}
This paper studies the mixing time of certain adaptive Markov Chain Monte Carlo algorithms. Under some regularity conditions, we show that the convergence rate of Importance Resampling MCMC algorithm, measured in terms of  the total variation distance is $O(n^{-1})$. By means of an example, we establish that in general, this algorithm does not converge at a faster rate.  We also study the interacting tempering algorithm, a simplified version of Equi-Energy sampler, and establish that its mixing time is of order $O(n^{-1/2})$. \medskip

\noindent{\it Keywords:} adaptive Markov Chain Monte Carlo, mixing time, total variation distance, Importance-Resampling Algorithm, Equi-Energy Sampler. 

\noindent{\it MSC2010 subject classifications:} Primary 65C05, 65C40; Secondary 60J05.
\end{abstract}

\section{Introduction}

Constructing Markov Chain Monte Carlo (MCMC) transition kernels to sample efficiently from a given distribution $\pi$, say, is a difficult task in practice, as it  requires a careful choice and tuning of the  kernel. 
The development of adaptive MCMC (AMCMC) methods is partly motivated by the need of overcoming  this difficulty. Instead of having a fixed Markov kernel $P$, at each round $n$ an AMCMC algorithm selects a kernel $P_{\what\theta_n}$ from a family of Markov kernels $\{P_\theta\}_{\theta\in\Theta}$, where the value (parameter) $\what\theta_n$ is computed based on possibly all the samples generated up to time $n$, so that the transition kernel is automatically self-adapted. 
See for example the recent survey by \citet{atchade11adaptive} and the references therein.

In this paper, we investigate the convergence rates of two AMCMC algorithms: the {\it Importance Resampling MCMC (IRMCMC)}  algorithm introduced by \citet{atchade09resampling}, and the {\it Equi-Energy (EE) sampler} by \citet{kou06equi}. The IRMCMC algorithm is also referred to {\it interacting annealing} algorithm \citep{bercu12fluctuations}. For the EE sampler, we actually focus on a simplified version, which is sometimes referred to as {\it interacting tempering} (IT) algorithm \citep{fort14central}. 

Throughout the paper we denote by $\{X_n\}_{n\in\N}$ the random process generated by either of these algorithms. Limit theorems, notably convergence of marginal distributions and law of large numbers have been known. See for example  \citep{andrieu08note,andrieu11nonlinear,atchade09resampling,atchade10cautionary,fort11convergence}, among others. 
Central limit theorems for such AMCMC algorithms have only been considered recently by \citet{fort14central} and \citet{bercu12fluctuations}. In short, introducing the auxiliary chain makes the stochastic process no longer Markov, which raises considerable technical difficulties. 

In this paper, we study the {\it convergence rate} (or mixing time) of the IRMCMC and IT algorithms. That is, we provide upper bounds on the distances between $\calL_{X_n}$ (the distribution of $X_n$) and the target distribution. Such mixing time results provide information on the burn-in time of the algorithm.  
Few results in the literature are known on the mixing rates of AMCMC. \citet{andrieu07efficiency}  considered AMCMC with a finite-dimensional parameter. Related results have been obtained by \citet{woodard09conditions,schmidler11lower} on convergence rates of AMCMC and related algorithms, although with a different point of view from us: they focused on the lower bound in terms of the problem size, not the simulation rounds.  

We show that the IRMCMC algorithm has convergence rate of order $O(n\inv)$. In particular, we also provide a simple example, for which the convergence rate has lower bound $1/n$. We also show that for $m$-tuple IRMCMC algorithm (to be defined in section~\ref{sec:mIRMCMC}), the mixing time is within $O(n\inv(\log n)^{m-1})$.
For the IT algorithm, under some regularity conditions, we show that the rate of convergence is $O(n^{-1/2})$ in terms of a slightly weaker norm than the total variation distance.  
These results do not automatically lead to a precise method for selecting burn-in periods, because the constants in the derived bounds are hard to compute in most practical cases. However, from a practical viewpoint, this analysis can be viewed as a cautionary tale, suggesting that AMCMC samplers based on auxiliary chains typically requires longer burn-in periods than standard, well-behaved MCMC samplers.

The rest of the paper is organized as follows. The remaining of the introduction gives a general description of the algorithms considered in the paper and introduces some notation. Section~\ref{sec:IRMCMC} is devoted to IRMCMC algorithm. The convergence rate is established in Section~\ref{sec:IRMCMCrate}, and for multiple IRMCMC algorithm in Section~\ref{sec:mIRMCMC}. Section~\ref{sec:EE} is devoted to IT algorithm.

\subsection{Notation}\label{sec:notation}
We assume that the state space $\calX$ is a Polish space equipped with a metric $\textsf{d}$, and $\calB$ is the associated Borel $\sigma$-algebra. In addition, $(\calX,\B)$ is a measure space with a reference $\sigma$-finite measure, which we denote for short by $\d x$. Let $\pi$ and $\pi_Y$ be  probability measures on $(\calX,\B)$. 
We assume that $\pi$ and $\pi_Y$ are both absolutely continuous with respect to $\d x$ and with a little abuse of notation, we also use $\pi$ and $\pi_Y$ to denote the density respectively. That is, we write $\pi(\d x) = \pi(x)\d x$ and similarly for $\pi_Y$. For a transition kernel $Q$, a measure $\nu$ and a function $h$, we shall write $\nu Q(\cdot)\eqdef \int\nu(\d z)Q(z,\cdot)$, and $Qh(\cdot)\eqdef \int Q(\cdot,\d z)h(z)$.

In this paper, an AMCMC algorithm  is a stochastic process $\{(X_n,Y_n)\}_{n\geq 0}$ in $\calX\times\calX$, designed such that the main chain $X_n$ converges to the target distribution $\pi$ in a certain sense to be described precisely later. We also assume that the auxiliary chain $\{Y_n\}_{n\geq 0}$ converges to $\pi_Y$. 
For the two algorithms analyzed in this paper, we assume that the evolution of the auxiliary chain is independent of the main chain. The auxiliary chain is not necessarily Markov.  Write $\filF_n = \sigma(X_0,\dots,X_n,Y_0,\dots,Y_n)$. 

We denote $\what\pi_{Y,n}$ the empirical measure associated to  the auxiliary chain $\indn Y$ defined by 
$\what \pi_{Y,n}(\cdot) \eqdef \frac1n\summ i1n\delta_{Y_i}(\cdot)$.
For functions $f:\calX\to\R$, we write 
\[
\what\pi_{Y,n}(\wb f) \eqdef \what\pi_{Y,n}(f) - \pi_Y(f).
\]
We avoid writing $\wb f$ for the centered version of $f$, as it would be  unclear with respect to which measure $f$ is centered, especially in the setup of multiple chains. 
We let $C$ denote general constants that do not depend on $n$, but may change from line to line. \bigskip

\noindent {\bf Acknowledgment} We thank an anonymous referee for the very careful reading of and helpful suggestions for our paper.

\nnewpage
\section{Importance Resampling MCMC}\label{sec:IRMCMC}
We consider the  {importance-resampling Markov Chain Monte Carlo} method described in \citet{atchade09resampling}. 

\begin{Algo}[IRMCMC]\label{algo:IRMCMC}
Fix $\epsilon\in(0,1)$. Pick arbitrary $X_0 = x_0$ and $Y_0 = y_0$. Let $P$ be an arbitrary Markov kernel with invariant distribution $\pi$. At each round $n$,  $X_n$ and $Y_n$ are conditionally independent given $\F_{n-1}$, and  
\[
X_{n}\mid \F_{n-1} \sim \left\{
\begin{array}{l@{\mbox{ w.p.~}}l}
P(X_{n-1},\cdot) & 1-\epsilon\,,\\
\what\theta_{n-1}(\cdot) &  \epsilon\,,
\end{array}\right.
\]
where $\what\theta_n$ is the (randomly) weighted empirical distribution defined by
\equh\label{eq:theta}
\what\theta_n(\cdot) = \summ i1{n}\frac{\wt w(Y_i)}{\summ j1{n}\wt w(Y_j)}\delta_{Y_i}(\cdot)=\frac{\int_\cdot \wt w(z)\what\pi_{Y,n}(\d z)}{\int_{\calX} \wt w(z)\what\pi_{Y,n}(\d z)},
\eque
with $\wt w(y) \propto \pi(y)/\pi_Y(y)=: w(y)$, and $\what\theta_0 = \delta_{y_0}$. Recall that $\pi_Y$ is the limiting distribution of the auxiliary chain $\{Y_n\}_{n\geq 0}$. We assume $|w|_\infty\eqdef\sup_{x\in\calX}|w(x)|<\infty$. 
\end{Algo}
For all probability measures $\theta$ on $\calX$, we introduce
\equh\label{eq:Ptheta}
P_\theta(x,\cdot) = (1-\epsilon)P(x,\cdot) + \epsilon \theta(\cdot)\,.
\eque
 In this way, for any bounded function $f:\calX\to\R$, 
$\esp (f(X_{n+1})\mid \filF_n) = P_{\what\theta_n}f(X_n)$ almost surely.
 \begin{Rem}\label{rem:w}
The assumption on the boundedness of $w$ is not too restrict. Indeed, very often in practice, we have $\wt\pi$, the un-normalized density function of $\pi$ as a bounded function, and set the auxiliary chain with stationary distribution $\wt\pi_Y\propto\pi_Y$ obtained by $\wt\pi_Y = \wt\pi^T$ with $T\in(0,1)$. In this case, $\wt w = \wt \pi/\wt \pi_Y$ is bounded and thus so is $w$.
\end{Rem}

\subsection{Convergence rate of IRMCMC}\label{sec:IRMCMCrate}
The following equivalent representation of Algorithm~\ref{algo:IRMCMC} is useful. 
Let $\{Z_n\}_{n\geq 0}$ be a sequence of independent and identically distributed random variables with $\proba(Z_1 = 1) = 1-\proba(Z_1 = 0) =  \epsilon$. Assume that $\{Z_n\}_{n\geq 0}$ and $\{Y_n\}_{n\geq 0}$ are independent and for each $n\geq 1$, $Z_n$ and $\F_{n-1}$ are independent.  Then, at round $n$, we can introduce $Z_n$, and write the conditional distribution of $X_n$ given $Z_n,\F_{n-1}$ as
\[
X_n\mid \F_{n-1},Z_n\sim\left\{
\begin{array}{l@{\mbox{ if }}l}
P(X_{n-1},\cdot) & Z_n = 0\\
\what\theta_{n-1}(\cdot) & Z_n = 1\,.
\end{array}
\right.
\]
Define 
\equh\label{eq:tau}
\tau_0 =0, \tau_{i+1} = \min\{k>\tau_i: Z_k = 1\} \mand n^* = \max\{k: \tau_k\leq n\}\,.
\eque
Observe that at each time $\tau_k>0$, conditioning on $Y_0, Y_1,\dots,Y_{\tau_k-1}$, $X_{\tau_k}$ is sampled from $\what\theta_{\tau_k-1}$, independent of $X_0,\dots,X_{\tau_k-1}$. Furthermore, $Y_0,\dots,Y_n$ are independent from $\tau_1,\dots,\tau_{n^*}$. Therefore, we first focus on
\equh\label{eq:eta}
\eta_n\eqdef \proba(X_{n+1}\in\cdot\mid Z_{n+1}=1) = \esp\what\theta_{n}(\cdot)\,,n\in\N\,.
\eque
We first obtain a bound on the total variation distance $\nnTV{\eta_n-\pi}$. 
Recall that, given two probability distributions $\mu$ and $\nu$, the total variation distance $\nnTV{\mu-\nu}$ is defined by:
$\nnTV{\mu-\nu} = \frac12\sup_{|f|_\infty\leq 1}|\mu(f)-\nu(f)|$.
For convenience, write 
\equh\label{eq:Bn}
B_n\eqdef |w|_\infty\sup_{|f|_\infty\leq 1}\esp\what\pi_{Y,n}(\wb f) + |w|_\infty^2\sup_{|f|_\infty\leq 1}\esp\pp{\what\pi_{Y,n}(\wb f)}^2, n\in\N.
\eque
Recall that throughout we assume $|w|_\infty<\infty$. 
\begin{Lem}\label{lem:etak} For all $n\in\N$, 
$\nnTV{\eta_n-\pi}
\leq B_n$.
\end{Lem}

The proof of Lemma~\ref{lem:etak} is postponed to next subsection. Lemma~\ref{lem:etak} yields an upper bound on the convergence rate of $\calL_{X_n}\weakto \pi$, as shown in the following theorem.  We set $B_0 = B_{-1} = 1$. 
\begin{Thm}\label{thm:IRMCMC}
Consider $\indn X$ generated from Algorithm~\ref{algo:IRMCMC}. 
Then, 
\equh\label{eq:boundXn}
\nnTV{\calL_{X_n}-\pi} 
\leq \sum_{\ell=0}^n (1-\epsilon)^{n-\ell}B_{\ell-1}.
\eque
Furthermore, for any bounded measurable function $f$, 
\begin{multline}\label{eq:L2fX}
\PE\bb{\frac1{\sqrt n}\summ i1n(f(X_i)-\pi(f))}^2\\
\leq \frac{80\epsilon^{-2}|f|_\infty^2}n + 64\epsilon^{-2}|f|_\infty^2 + |f|_\infty^2\pp{\frac1{\sqrt n}\summ k0{n-1}\sqrt{B_k}}^2, n\in\N.
\end{multline}

\end{Thm}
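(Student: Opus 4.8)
The plan is to center the summand and pass to the Poisson equation of the \emph{idealized} resampling kernel, for which the $\epsilon$-resampling furnishes an explicit bounded solution irrespective of the mixing of $P$. Write $g\defe f-\pi(f)$, so that $\pi(g)=0$ and $|g|_\infty\le 2|f|_\infty$, and observe that the left-hand side of \eqref{eq:L2fX} equals $n\inv\esp(\summ i1n g(X_i))^2$. Since $\pi P=\pi$, the map $h\mapsto(1-\epsilon)Ph$ contracts centered bounded functions geometrically, so
\[
\what f\defe\sif k0(1-\epsilon)^kP^kg
\]
is well defined, satisfies $\pi(\what f)=0$ and $|\what f|_\infty\le\epsilon\inv|g|_\infty\le 2\epsilon\inv|f|_\infty$, and solves the Poisson equation $g=\what f-(1-\epsilon)P\what f$. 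This is the origin of the factor $\epsilon\inv$, hence of the $\epsilon^{-2}$ appearing after squaring in the first two terms of the bound.

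Next I would convert the Poisson identity into a one-step decomposition. By the definition of the dynamics and \eqref{eq:Ptheta}, $\esp(\what f(X_m)\mid\F_{m-1})=(1-\epsilon)P\what f(X_{m-1})+\epsilon\,\what\theta_{m-1}(\what f)$, so
\[
g(X_{m-1})=\what f(X_{m-1})-\what f(X_m)+D_m+\epsilon\,\what\theta_{m-1}(\what f),\qquad D_m\defe\what f(X_m)-\esp(\what f(X_m)\mid\F_{m-1}).
\]
Summing over $m=1,\dots,n$ and using $\summ m1n g(X_{m-1})=\summ i1n g(X_i)+g(X_0)-g(X_n)$ gives
\[
\summ i1n g(X_i)=\big(\what f(X_0)-\what f(X_n)\big)+\summ m1n D_m+\epsilon\sum_{k=0}^{n-1}\what\theta_k(\what f)+g(X_n)-g(X_0),
\]
where the four boundary values are bounded by $2|\what f|_\infty$ or $2|g|_\infty$ and will be absorbed into the $O(\epsilon^{-2})$ constant. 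The crucial structural point is that each $\what\theta_k(\what f)$ depends on the auxiliary chain alone, so the resampling sum is $\sigma(Y_0,Y_1,\dots)$-measurable. I would therefore condition on the whole auxiliary chain: by the conditional independence of $X_{m}$ from $\{Y_k\}_{k\ge m}$ built into the model, $\{X_m\}$ is, given the $Y$'s, a time-inhomogeneous Markov chain and $\{D_m\}$ are \emph{orthogonal} martingale increments, while $\epsilon\sum_k\what\theta_k(\what f)$ is a constant. This separates $\esp(\summ i1n g(X_i))^2$ into a martingale/boundary part and the squared resampling part, and is exactly why the $B_k$-term can carry coefficient one while the other two carry the inflated constants $64$ and $80$.

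For the martingale/boundary part, orthogonality gives $\esp(\summ m1n D_m)^2=\summ m1n\esp D_m^2$ with $\esp(D_m^2\mid\F_{m-1})\le|\what f|_\infty^2\le 4\epsilon^{-2}|f|_\infty^2$, producing the $O(\epsilon^{-2}n)$ variance term; the telescoping difference contributes $\le 4|\what f|_\infty^2$, an $O(\epsilon^{-2})$ constant; the residual cross terms are dispatched by Cauchy--Schwarz and accrue into the stated constants. For the resampling part, since $\pi(\what f)=0$ we may write $\what\theta_k(\what f)=\what\theta_k(\what f)-\pi(\what f)$ and invoke Minkowski's inequality in $L^2$,
\[
\esp\bb{\sum_{k=0}^{n-1}\what\theta_k(\what f)}^2\le\pp{\sum_{k=0}^{n-1}\sqrt{\esp\what\theta_k(\what f)^2}}^2,
\]
which is precisely the step turning a per-term second-moment bound into the $(\sum_k\sqrt{B_k})^2$ shape. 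The main obstacle is that second-moment estimate, $\esp\what\theta_k(\what f)^2\le C\,B_k|\what f|_\infty^2$ (with $\what\theta_0=\delta_{y_0}$ and $B_0=1$ giving the $k=0$ term). Writing $\what\theta_k(\what f)=\what\pi_{Y,k}(w\what f)/\what\pi_{Y,k}(w)$ and using $\pi_Y(w\what f)=\pi(\what f)=0$, the numerator is centered and its second moment is controlled by the piece $2|w|_\infty^2\sup_{|h|\le1}\esp(\what\pi_{Y,k}(h-\pi_Y(h)))^2$ of $B_k$ after scaling by $|w|_\infty^2|\what f|_\infty^2$; the delicate point is that the random normalizer $\what\pi_{Y,k}(w)$ must be shown to stay near $\pi_Y(w)=1$, so that the second moment of the \emph{ratio}, not merely of the numerator, is tamed. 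This is exactly the estimate that the definition of $B_k$, with both its first- and second-moment terms and the weight $|w|_\infty$, is built to supply, paralleling the computation behind Lemma~\ref{lem:etak}; combined with $|\what f|_\infty\le 2\epsilon\inv|f|_\infty$ it yields $\epsilon^2(\sum_k\sqrt{\esp\what\theta_k(\what f)^2})^2=O(|f|_\infty^2(\sum_k\sqrt{B_k})^2)$ and completes the bound.
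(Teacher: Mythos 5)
Your proposal addresses only the second claim, \eqref{eq:L2fX}; the first claim \eqref{eq:boundXn} is nowhere proved, and your $L^2$ machinery does not yield it. The paper's argument for \eqref{eq:boundXn} is of a different nature: it conditions on the last resampling time $\tau_{n^*}$ before $n$, observes that given $\tau_{n^*}=k\geq 1$ the conditional law of $X_n$ is $\eta_{k-1}P^{n-k}$, uses $\nnTV{\eta_{k-1}P^{n-k}-\pi}\leq\nnTV{\eta_{k-1}-\pi}\leq B_{k-1}$ (Lemma~\ref{lem:etak} together with $\pi P=\pi$), and sums against $\proba(\tau_{n^*}=k)=\epsilon(1-\epsilon)^{n-k}$. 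You need to supply this (or an equivalent) argument; as written, half the theorem is missing.

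For \eqref{eq:L2fX} your route is essentially correct and is a genuinely cleaner variant of the paper's. The paper solves Poisson's equation for the adapted kernel itself, via $g_\theta=\sum_j(P_\theta^jf-\pi_\theta(f))$ and the explicit invariant measure $\pi_\theta=\epsilon\sum_j(1-\epsilon)^j\theta P^j$, and must then control two extra telescoping ``adaptation'' sums such as $\sum_k(P_{\what\theta_k}g_{\what\theta_k}-P_{\what\theta_{k-1}}g_{\what\theta_{k-1}})(X_k)$. Your $\theta$-independent resolvent $\what f=\sum_k(1-\epsilon)^kP^kg$, satisfying $g=\what f-(1-\epsilon)P\what f$, makes those terms disappear while producing the same three ingredients (martingale, boundary, resampling sum); indeed $\epsilon\what\theta_k(\what f)=\what\theta_k(f_\epsilon)-\pi(f_\epsilon)=\pi_{\what\theta_k}(f)-\pi(f)$, so your resampling term coincides with the paper's. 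Your observation that $\E(D_m\mid\F_{m-1}\vee\sigma(Y_0,Y_1,\dots))=0$, which annihilates the cross term between the martingale and the $\sigma(Y)$-measurable resampling sum, is a nice improvement on the paper's implicit handling of that interaction. Two caveats remain: (i) the crucial second-moment bound $\E\what\theta_k^2(f_\epsilon-\pi(f_\epsilon))\leq B_k|f_\epsilon|_\infty^2$ is only asserted to ``parallel Lemma~\ref{lem:etak}''--- it does follow from \eqref{eq:etan3}--\eqref{eq:etan4}, but since this is exactly where $B_k$ enters you should write it out rather than gesture at it; and (ii) the explicit constants $80$, $64$ and the unit coefficient on $\pp{\frac1{\sqrt n}\summ k0{n-1}\sqrt{B_k}}^2$ are never tracked (your version of the resampling estimate carries an unspecified $C$, and the boundary--resampling cross term is not shown to be absorbable without inflating that coefficient), so as written you obtain the bound only up to constants.
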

The proof of Theorem~\ref{thm:IRMCMC} is postponed to next subsection. 
\begin{Rem}
In  Theorem~\ref{thm:IRMCMC}, we do not assume  any ergodicity assumption on the kernel $P$. In the case $P$ is geometrically ergodic, one can improve (\ref{eq:boundXn}) quantitatively by bounding the term $\nnTV{\eta_k P^{n-k}-\pi}$ more effectively. For example, if $P$ is uniformly ergodic with rate $\rho$, then (\ref{eq:boundXn}) would become 
$\nnTV{\calL_{X_n}-\pi} 
\leq \sum_{\ell=0}^n \left[\rho(1-\epsilon)\right]^{n-\ell}B_{\ell-1}$.
A similar improvement can be formulated for (\ref{eq:L2fX}). However, these improvements do not change the rate but only the constant in the corollary below. Beside, such improvements will not be easily available if $P$ is sub-geometrically ergodic.
\end{Rem}
Now, as a corollary we obtain an upper bound on the convergence rate of IRMCMC algorithm, under the following assumption.
\assumpH
\item \label{A:Y}
There exist a finite constant $C$ such that for all measurable function $f:\;\calX\to\rset$,  with $|f|_\infty\leq 1$, 
\equh\label{eq:AY}
\esp \what\pi_{Y,n}(\wb f)\leq \frac {C}n \qmand \PE\bbpp{\what\pi_{Y,n}(\wb f)}^2 \leq \frac{C}n .
\eque
\assumpE
\begin{Rem}
Since $\PE\pp{\what\pi_{Y,n}(\wb f)}^2=n\inv\PE\left[n^{-1/2}\sum_{i=1}^n (f(Y_i)-\pi_Y(f))\right]^2$, the second part of Assumption \ref{A:Y} simply requires the finiteness of asymptotic variance under $\indn Y$ which is also a very desirable property in practice.  This is a fairly mild assumption that holds for many  processes with short-range dependence. See for example~\citet{haggstrom07variance} for further discussion when $\indn Y$ is a Markov chain.

The first part of \ref{A:Y} is also a fairly mild ergodicity assumption. 
\end{Rem}

\begin{Coro}\label{coro:1}
Consider the importance resampling MCMC (Algorithm~\ref{algo:IRMCMC}). If Assumption~\ref{A:Y} holds, then there exists a finite constant $C$ such that
\[
\nnTV{\calL_{X_n}-\pi} \leq \frac {C}n\,.
\]
Furthermore for any bounded measurable function $f$,
\[
\PE\bb{\frac1{\sqrt n}\summ i1n(f(X_i)-\pi(f))}^2 \leq C |f|_\infty^2\,, n\in\N\,.
\]
\end{Coro}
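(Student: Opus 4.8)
The plan is to treat this as a direct consequence of Theorem~\ref{thm:IRMCMC}, the only real work being to feed Assumption~\ref{A:Y} into the bounds (\ref{eq:boundXn}) and (\ref{eq:L2fX}) and then control the two resulting sums. The first step is to convert Assumption~\ref{A:Y} into a decay rate for $B_n$. Taking the supremum over $|f|_\infty\le 1$ in the two estimates of (\ref{eq:AY}) and inserting them into the definition (\ref{eq:Bn}) gives, for every $n\ge 1$,
\[
B_n \le |w|_\infty\frac{C}{n} + 2|w|_\infty^2\frac{C}{n} \le \frac{C'}{n},
\]
with $C' \eqdef C(|w|_\infty + 2|w|_\infty^2)<\infty$ since $|w|_\infty<\infty$. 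Together with the convention $B_0=B_{-1}=1$, this reduces everything to estimating two elementary sums.

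For the total variation bound I would substitute $B_{\ell-1}\le C'/(\ell-1)$ (valid for $\ell\ge 2$) into (\ref{eq:boundXn}) and split the right-hand side into the boundary terms $\ell\in\{0,1\}$ and the bulk $\ell\ge 2$. The boundary terms equal $(1-\epsilon)^n + (1-\epsilon)^{n-1}$, which decay geometrically and are $\le C/n$ because $\sup_{n\ge1} n(1-\epsilon)^n<\infty$. After the substitution $m=\ell-1$ the bulk is a convolution $\sum_{m=1}^{n-1}(1-\epsilon)^{n-1-m}/m$ of a geometric sequence against $1/m$. I would bound it by splitting at $m=\lceil n/2\rceil$: on $m>n/2$ one uses $1/m<2/n$ and sums the geometric factor to obtain a term of order $1/(n\epsilon)$, while on $m\le n/2$ the geometric factor is at most $(1-\epsilon)^{n/2-1}$ and the harmonic sum is $O(\log n)$, so that part is exponentially small. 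Hence the entire right-hand side of (\ref{eq:boundXn}) is $O(n^{-1})$, yielding the first claim.

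For the $L^2$ bound I would insert the estimates into (\ref{eq:L2fX}). The first two terms are already $\le (80+64)\epsilon^{-2}|f|_\infty^2$ for all $n\ge 1$, so it suffices to control the last term. Using $\sqrt{B_k}\le \sqrt{C'}\,k^{-1/2}$ for $k\ge 1$ and $\sqrt{B_0}=1$, the partial sum obeys $\sum_{k=0}^{n-1}\sqrt{B_k}\le 1+\sqrt{C'}\sum_{k=1}^{n-1}k^{-1/2}\le C''\sqrt{n}$ by comparison with $\int_0^n x^{-1/2}\,\d x$. Consequently $\bbpp{n^{-1/2}\sum_{k=0}^{n-1}\sqrt{B_k}}^2\le (C'')^2$ is bounded uniformly in $n$, and collecting the three terms produces the constant $C$ in the statement.

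The only nonroutine step is the convolution estimate for the total variation bound: one must verify that convolving a geometric weight with the slowly decaying sequence $1/m$ still produces an $O(n^{-1})$ tail, rather than, say, $O(\log n / n)$. The split-at-$n/2$ argument handles this because the geometric mass concentrates near the upper endpoint $m\approx n$, where $1/m$ is already of order $1/n$, while the genuinely large values of $1/m$ near $m=1$ are killed by the factor $(1-\epsilon)^{n/2}$. Everything else is bookkeeping with constants, which may be absorbed into the single $C$ of the statement.
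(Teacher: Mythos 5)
Your proposal is correct and is exactly the derivation the paper intends (the paper leaves the corollary's proof implicit as a consequence of Theorem~\ref{thm:IRMCMC}): substitute the $O(1/n)$ decay of $B_n$ from Assumption~\ref{A:Y} into \eqref{eq:boundXn} and \eqref{eq:L2fX}, and the split-at-$n/2$ convolution estimate and the $\sum_k k^{-1/2}=O(\sqrt n)$ bound are the right way to finish. No gaps.
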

\begin{proof}Under Assumption~\ref{A:Y},~\eqref{eq:boundXn} yields
\eqnh
\nnTV{\calL_{X_n}-\pi} 
& \leq &  \frac Cn\bb{\summ \ell1{\floor{n/2}}(1-\epsilon)^{n-\ell}\frac n{\ell} + \summ\ell{\floor{n/2}+1}n(1-\epsilon)^{n-\ell}\frac n{\ell}} \\
& \leq & \frac Cn\bb{(1-\epsilon)^{n/2}n + \frac2{1-\epsilon}}.
\eqne
This proves the first conclusion. The proof of the second is staight-forward and thus omitted.
\end{proof}

\subsection{Proofs of Lemma~\ref{lem:etak} and Theorem~\ref{thm:IRMCMC}}
\begin{proof}[Proof of Lemma~\ref{lem:etak}]

Rewrite $\eta_n(f)$ as,
\eqnh
\eta_n(f) & = & \esp\bbpp{\summ j1{n}\frac{w(Y_j)}{\summ l1{n}w(Y_l)}f(Y_j)} \\
& = & \esp\bb{\frac1n\summ j1{n}w(Y_j)f(Y_j)+ \bbpp{1-\frac1n{\summ j1{n}w(Y_j)}}\summ j1{n}\frac{w(Y_j)f(Y_j)}{\summ l1{n}w(Y_l)}}\\
& = & \esp\bb{\what\pi_{Y,n}(wf) - \what\pi_{Y,n}(\wb w)\what\theta_n(f)},
\eqne
where in the third equality above we used the fact that $\pi_Y(w)=1$.
Since $\pi(f) = \pi_Y(wf)$, $  \nnTV{\eta_n -\pi} =  \sup_{|f|_\infty\leq 1}\frac12\bbpp{\eta_n(f)-\pi(f)}$ is bounded by
\begin{multline*}
 \frac12\sup_{|f|_\infty\leq 1} {\esp\what\pi_{Y,n}(\wb{wf})} + \frac12\sup_{|f|_\infty\leq 1}{\esp\pp{\what\pi_{Y,n}(\wb w)\what\theta_n(f)}}\\
\leq   \frac12\sup_{|f|_\infty\leq 1}{\esp\what\pi_{Y,n}(\wb{wf})}   + \frac12\sup_{|f|_\infty\leq 1}{\esp\bb{\what\pi_{Y,n}(\wb w)\pi_Y(wf)}}  \\
 + \frac12\sup_{|f|_\infty\leq 1}{\esp\bb{\what\pi_{Y,n}(\wb w)\pp{\what\theta_n(f)-\pi_Y(wf)}}}.
\end{multline*}
Observe that $\sup_{|f|_\infty\leq 1}\esp\spp{\what\pi_{Y,n}(\wb w)\pi_Y(wf)} = \sup_{|f|_\infty\leq 1}\pi(f)\esp\what\pi_{Y,n}(\wb w)
\leq |w|_\infty\sup_{|f|_\infty\leq 1}\what\pi_{Y,n}(\wb f)$
and $|w|_\infty\geq 1$.
Therefore,
\begin{multline}\label{eq:etan}
\nnTV{\eta_n-\pi}\\
\leq |w|_\infty{\sup_{|f|_\infty\leq 1}{\esp\what\pi_{Y,n}(\wb f)} + \frac12\sup_{|f|_\infty\leq 1}{\esp\bb{\what\pi_{Y,n}(\wb w)\bbpp{\what\theta_n(f)-\pi_Y(wf)}}}}.
\end{multline}
By Cauchy--Schwarz inequality, 
\begin{multline}
\sup_{|f|_\infty\leq 1}\esp\bb{{\what \pi_{Y,n}(\wb w)}\bbpp{\what\theta_n(f)-\pi_Y(wf)}}\\
\leq \bb{\esp\bbpp{\what \pi_{Y,n}(\wb w)}^2}^{1/2}\times \sup_{|f|_\infty\leq 1}\bb{\esp\bbpp{\what\theta_n(f)-\pi_Y(wf)}^2}^{1/2}\,.\label{eq:etan2}
\end{multline}
The first term is bounded by $|w|_\infty\sup_{|f|_\infty\leq 1}\sbb{\esp\spp{\what \pi_{Y,n}(\wb f)}^2}^{1/2}$. For the second term, observe that
\begin{multline}\label{eq:etan3}
\esp\bbpp{\what\theta_n(f)-\pi_Y(wf)}^2 \\
\leq 2\esp \bbpp{\what\theta_n(f)-\what\pi_{Y,n}(wf)}^2 + 2 \esp \bbpp{\what\pi_{Y,n}(wf) - \pi_Y(wf)}^2,
\end{multline}
and
\begin{multline*}
\esp \bbpp{\what\theta_n(f)-\what\pi_{Y,n}(wf)}^2 = \esp \bbpp{\summ j1{n}\frac{w(Y_j)f(Y_j)}{\summ l1{n}w(Y_l)} - \frac1n\summ j1{n}w(Y_j)f(Y_j)}^2\\
= \esp\bb{\bbpp{1-\what\pi_{Y,n}(w)}^2\what\theta_n^2(f)}
\leq \esp\bbpp{\pi_Y(w)-\what\pi_{Y,n}(w)}^2\\
\leq |w|_\infty^2\sup_{|g|_\infty\leq 1}\esp\bbpp{\what \pi_{Y,n}(\wb g)}^2,
\end{multline*}
and the above calculation holds for all $f: |f|_\infty\leq 1$. So,~\eqref{eq:etan3} becomes
\equh\label{eq:Bk}
\sup_{|f|_\infty\leq 1}\esp\pp{\what\theta_n(f) - \pi_Y(wf)}^2\leq 4|w|_\infty^2\sup_{|f|_\infty\leq 1}\esp(\what \pi_{Y,n}(\wb f))^2.
\eque
 Combining~\eqref{eq:etan},~\eqref{eq:etan2} and the above inequality yields the desired result.
\end{proof}

\begin{proof}[Proof of Theorem~\ref{thm:IRMCMC}]
 We recall that $\tau_{n^*}$ is the last time $k$ before $n$ that the main chain is sampled from $\what\theta_{k-1}$. Now, we can write
\eqnhn
\nnTV{\calL_{X_n}-\pi} 
& = & \sup_{|f|_\infty\leq 1}\frac12\abs{\summ k0n\esp(f(X_n)\indd{\tau_{n^*} = k}) - \pi(f)}\nonumber\\
& = & \sup_{|f|_\infty\leq 1}\frac12\abs{\summ k0n\proba(\tau_{n^*} = k)\sbb{\esp(f(X_n)\mid \tau_{n^*} = k) - \pi(f)}}\nonumber.
\eqnen
Thus
\equh
\nnTV{\calL_{X_n}-\pi}
 \leq  \summ k0n\proba(\tau_{n^*} = k)\sup_{|f|_\infty\leq 1}\frac12|\esp(f(X_n)\mid \tau_{n^*} = k) - \pi(f)|.\label{eq:bound}
\eque
Observe that the conditional distribution of $X_n$ given that $\tau_{n^*} = k\geq 1$, is $\eta_{k-1}P^{n-k}$ (set $\eta_0=\delta_{Y_0}$). Then, 
\eqnh
\sup_{|f|_\infty\leq 1}\frac12|\esp(f(X_n)\mid\tau_{n^*} = k) - \pi(f)| & = &
\sup_{|f|_\infty\leq 1}\frac12|\eta_{k-1}P^{n-k}(f) - \pi(f)| \\
& = & \nnTV{\eta_{k-1}P^{n-k}-\pi}\,.
\eqne
By  the fact that $\pi P = \pi$, we have $\nnTV{\eta_{k-1} P^{n-k}-\pi}  \leq \nnTV{\eta_{k-1}-\pi}\leq B_{k-1}$, by Lemma \ref{lem:etak}. Also $\proba(\tau_{n^*} = k)=\epsilon(1-\epsilon)^{n-k}$ for $k=1,\dots,n$ and $\proba(\tau_{n^*} = 0) = (1-\epsilon)^n$. Thus,~\eqref{eq:bound} becomes \eqref{eq:boundXn}. 

To establish (\ref{eq:L2fX}), we show that the partial sum $\sum_{k=1}^n \left(f(X_k)-\pi(f)\right)$ admits a well behaved martingale approximation. For a probability measure $\theta$ on $\calX$, define
\[\pi_\theta(A)=\epsilon\sum_{j=0}^\infty(1-\epsilon)^j(\theta P^j)(A),\;\;A\in\calB.\]
Clearly, $\pi_\theta$ is a  probability measure on $(\calX,\calB)$, and one can verify that  $\pi_\theta P_\theta = \pi_\theta$, and moreover
that for any bounded measurable function $f$, and $n\geq 1$,
\begin{equation}\label{eq:ratePtheta1}
P_\theta^nf(x)-\pi_\theta(f)=(1-\epsilon)^nP^nf(x)-\epsilon\sum_{j=n}^\infty(1-\epsilon)^j(\theta P^j)f.\end{equation}
Indeed, the case $n=1$ follows from the definition of $P_\theta$ in~\eqref{eq:Ptheta}. For $n\geq 1$, by induction, $P_\theta^{n+1}f(x)-\pi_\theta(f) = P_\theta^n(P_\theta f)(x) - \pi_\theta(P_\theta f)$ equals
\begin{multline*}
(1-\epsilon)^{n+1}P^{n+1}f(x) + (1-\epsilon)^n\epsilon\theta f - \epsilon\sif jn(1-\epsilon)^j(\theta P^j)[(1-\epsilon)Pf + \epsilon\theta f]\\
=(1-\epsilon)^{n+1}P^{n+1}f(x)  + \epsilon \sif j{n+1}(1-\epsilon)^j(\theta P^j)f.
\end{multline*}

It then follows from~\eqref{eq:ratePtheta1} that
$\nnTV{P_\theta^n(x,\cdot)-\pi_\theta}\leq 2(1-\epsilon)^n$, and consequently the function 
\equh\label{eq:gtheta}
g_\theta(x)=\sum_{j=0}^\infty \left(P_\theta^jf(x)-\pi_\theta(f)\right),
\eque
is well-defined with $|g_\theta|_\infty\leq 2\epsilon^{-1}|f|_\infty$, and satisfies Poisson's equation
\begin{equation}\label{eq:PoissonPtheta}
g_\theta(x)-P_\theta g_\theta(x)=f(x)-\pi_\theta(f),\;\;\;x\in\calX.\end{equation}
In particular, we have $f(X_k)-\pi_{\what\theta_{k-1}}(f)=g_{\what\theta_{k-1}}(X_k)-P_{\what\theta_{k-1}}g_{\what\theta_{k-1}}(X_k)$, almost surely. Using this, we write:
\[
\sum_{k=1}^n \left(f(X_k)-\pi(f)\right) =  \sum_{k=1}^n \left(\pi_{\what\theta_{k-1}}(f)-\pi(f)\right)+ \sum_{k=1}^n \left(f(X_k)-\pi_{\what\theta_{k-1}}(f)\right)
\]
with
\eqnhn
\sum_{k=1}^n \pp{f(X_k)-\pi_{\what\theta_{k-1}}(f)} & = & \sum_{k=1}^n \left(g_{\what\theta_{k-1}}(X_k)-P_{\what\theta_{k-1}}g_{\what\theta_{k-1}}(X_{k-1})\right)\nonumber\\
& & +\sum_{k=1}^n \left(P_{\what\theta_{k-1}}g_{\what\theta_{k-1}}(X_{k-1})-P_{\what\theta_{k}}g_{\what\theta_{k}}(X_{k})\right) \nonumber\\
& & + \sum_{k=1}^n \left(P_{\what\theta_{k}}g_{\what\theta_{k}}(X_{k})-P_{\what\theta_{k-1}}g_{\what\theta_{k-1}}(X_{k})\right).\label{eq:3terms}
\eqnen

From the definition of $\pi_\theta$, notice that we can write 
\[\sum_{k=1}^n \left(\pi_{\what\theta_{k-1}}(f)-\pi(f)\right)=\sum_{k=1}^n \what\theta_{k-1}(f_\epsilon-\pi(f_\epsilon)),\]
where $f_\epsilon(x)=\epsilon\sum_{j=0}^\infty(1-\epsilon)^j P^j f(x)$. 
Thus, 
\begin{multline*}
\esp\bb{\summ k1n\pp{\pi_{\what\theta_{k-1}}(f)-\pi(f)}}^2 \\
\leq 
\pp{\summ k1n\pp{\esp\what\theta_{k-1}^2(f_\epsilon - \pi(f_\epsilon))}^{1/2}}^2
\leq |f|_\infty^2\pp{\summ k0{n-1}\sqrt{B_k}}^2,
\end{multline*}
where in the last equality, we use the fact that  $\sup_{|f|_\infty\leq 1}\PE\what\theta_k^2(f-\pi(f))\leq B_k$,
established in~\eqref{eq:Bk} in Lemma~\ref{lem:etak}.

We now bound the three sums on the right-hand side of~\eqref{eq:3terms}. By~\eqref{eq:ratePtheta1},~\eqref{eq:gtheta} and~\eqref{eq:PoissonPtheta},  for any probability measures $\theta,\theta'$ and $x\in\calX$,
\[P_\theta g_\theta(x)-P_{\theta'}g_{\theta'}(x)=\int(\theta'-\theta)(\d z)\left(\epsilon\sum_{j=0}^\infty j(1-\epsilon)^j P^jf(z)\right).\]
This implies that 
\begin{multline*}
\abs{\sum_{k=1}^n \left(P_{\what\theta_{k}}g_{\what\theta_{k}}(X_{k})-P_{\what\theta_{k-1}}g_{\what\theta_{k-1}}(X_{k})\right)} \\
= \abs{(\what\theta_0-\what\theta_n)\pp{\epsilon\sif j0j(1-\epsilon)^jP^jf}}\leq \frac{2(1-\epsilon)}\epsilon|f|_\infty.
\end{multline*}
Next, observe  
\begin{multline*}
\abs{\sum_{k=1}^n \left(P_{\what\theta_{k-1}}g_{\what\theta_{k-1}}(X_{k-1})-P_{\what\theta_{k}}g_{\what\theta_{k}}(X_{k})\right)} \\
= \abs{P_{\what\theta_0}g_{\what\theta_0}(X_0) - P_{\what\theta_n}g_{\what\theta_n}(X_n)}\leq |g_{\what\theta_0}|_\infty + |g_{\what\theta_n}|_\infty\leq 4\epsilon\inv|f|_\infty.
\end{multline*}

Finally we also notice that $\sum_{k=1}^n \left(g_{\what\theta_{k-1}}(X_k)-P_{\what\theta_{k-1}}g_{\what\theta_{k-1}}(X_{k-1})\right) =:\summ k1n D_k$ is a martingale with respect to $\{\F_n\}$, whence
$\esp\pp{\summ k1nD_k}^2  = \summ k1n\esp D_k^2\leq 4n\sup_{\theta}|g_\theta|_\infty^2\leq 16\epsilon^{-2}|f|_\infty^2n$.
 Using all the above, we obtain~\eqref{eq:L2fX}.
\end{proof}

\subsection{An example on the lower bound}
We provide an example where $O(n\inv)$ is also the lower bound of the rate for both $\nnTV{\eta_n-\pi}$ and $\nnTV{\calL_{X_n}-\pi}$. This shows that the rate in our upper bound in Corollary~\ref{coro:1} is optimal. 
\begin{Example}\label{rem:2state}
Consider the simple case when $\calX = \{\pm1\}$, and $\pi = \pi_Y$. In this case, the weight function is uniform ($w \equiv 1$). Suppose the auxiliary chain $\{Y_n\}_{n\geq 0}$ has transition matrix
\[
P_Y = \bbpp{
\begin{array}{cc}
1-a & a\\
b & 1-b
\end{array}
}, \mwith a,b\in(0,1)\,.
\]
The corresponding Markov chain has stationary distribution $\pi_Y = (a+b)\inv(b,a)$ and eigenvalues $\lambda_1 = 1,\lambda_2 = 1-a-b$. Suppose $a+b\neq 1$ and the chain starts at $Y_0 = -1$. By straight-forward calculation, $\proba(Y_n = -1) = a/(a+b) + b/(a+b)\lambda_2^n$, and 
\[
\esp\what\pi_{Y,n}(\{-1\}) - \pi_Y(\{-1\})  = \frac a{a+b}\frac1n\frac{\lambda_2-\lambda_2^{n+1}}{1-\lambda_2}\,.
\]
It then follows from the definition that $\nnTV{\eta_n-\pi}\geq C/n$.

Furthermore, in~\eqref{eq:Ptheta} set $P(x,\cdot) = \pi(\cdot)$. That is, $P$ is the {\it best} kernel we can put into the algorithm, in the sense that it takes one step to arrive at the stationary distribution (although this is too ideal to be practical). Now,
\eqnh
\proba(X_n = -1) - \pi(\{-1\}) & = & (1-\epsilon)\pi(\{-1\}) + \epsilon\esp\what\pi_{Y,n}(\{-1\}) - \pi(\{-1\})\\
& = & \epsilon\bbpp{\esp\what\pi_{Y,n}(\{-1\}) - \pi_Y(\{-1\})}\,.
\eqne
It then follows that $\nnTV{\calL_{X_n}-\pi}\geq C/n$.

\end{Example}

\subsection{Multiple IRMCMC}\label{sec:mIRMCMC}
We discuss a multiple chain importance-resampling MCMC algorithm and establish a similar convergence rate as in Section~\ref{sec:IRMCMCrate}, by a repeated application of Theorem \ref{thm:IRMCMC}. For $m\geq 1$, and $\ell\in\{0,\ldots,m\}$, let $\pi^{(\ell)}$ be a probability measure on $\calX$, and $P_\ell$ a Markov kernel with invariant distribution $\pi^{(\ell)}$, such that $\pi^{(m)}=\pi$.
\begin{Algo}[Multiple IRMCMC]\label{algo:mIRMCMC}
Fix $\epsilon\in(0,1)$, and choose $(X_0^{(0)},\ldots,X^{(m)}_0)=(x_0^{(0)},\ldots,x^{(m)}_0)$.  Given $\F_n=\sigma\left\{(X_k^{(0)},\ldots,X^{(m)}_k),\;0\leq k\leq n\right\}$:
sample independently sample $X_{n+1}^{(0)}\sim P_0(X_n^{(0)},\cdot)$, and for $1\leq \ell\leq m$, $X_{n+1}\topp\ell\sim P_{\ell,\what\theta\topp{\ell-1}_{n}}(X_n\topp \ell,\cdot)$ with
\[
P_{\ell,\theta}(x,\cdot) = (1-\epsilon) P_\ell(x,\cdot) + \epsilon\theta(\cdot) 
\]
and
\[
\what\theta\topp{\ell-1}_n(\cdot) = \summ i1{n}\frac{w_\ell(X_i\topp{\ell-1})}{\summ j1{n}w_\ell(X_j\topp{\ell-1})}\delta_{X_i\topp{\ell-1}}(\cdot),
\]
with $w_\ell(x) = {\pi_\ell(x)}/{\pi_{\ell-1}(x)}, x\in\calX$.
\end{Algo}
To bound $\snnTV{\calL_{X_n\topp\ell}-\pi_\ell}$, it suffices to control 
\equh\label{eq:Bnl}
B_n\topp{\ell-1} \eqdef \sup_{|f|_\infty\leq 1}\esp\what\pi_{X\topp{\ell-1},n}(\wb f) + \sup_{|f|_\infty\leq 1}\esp\pp{\what\pi_{X\topp{\ell-1},n}(\wb f)}^2, n\in\N,
\eque
where this time $\what\pi_{X\topp\ell,n}(\wb f) \eqdef \what\pi_{X\topp\ell,n}(f) - \pi_{\ell}(f)$.
In fact, it suffices to control $B_n^{(0)}$, which is the purpose of the following assumption. 

\assumpH
\item \label{A:mIRMCMC}
As $n\to\infty$, the initial Markov chain $\{X_n\topp 0\}_{n\geq 0}$ satisfies $B_n^{(0)}\leq C/n$.
\assumpE

\begin{Thm}\label{thm:mIRMCMC}
Consider the multiple IRMCMC (Algorithm~\ref{algo:mIRMCMC}) for which Assumption~\ref{A:mIRMCMC} holds and $\max_{\ell=1,\dots,m}|w_\ell|_\infty<\infty$. Then for $\ell = 1,\dots,m$, there exists a finite constant $C$ such that for $n\geq 2$,
\equh\label{eq:lognm}
\nnTV{\calL_{X_n\topp \ell} - \pi_\ell} \leq \frac{C(\log n)^{\ell-1}}n\,,
\eque
and for any bounded measurable function $f$,
\begin{equation}\label{eq:varm}
\esp\bb{\frac1{\sqrt n}\summ i1n\pp{f(X_i\topp{\ell})-\pi_\ell(f)}}^2 \leq C.\end{equation}

\end{Thm}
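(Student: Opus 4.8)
The natural plan is induction on the level $\ell$, with the single-chain result Theorem~\ref{thm:IRMCMC} as the engine. At level $\ell$ the main chain $X\topp\ell$ and its auxiliary $X\topp{\ell-1}$ play exactly the roles of $\{X_n\}$ and $\{Y_n\}$ in Algorithm~\ref{algo:IRMCMC}, with weight $w_\ell=\pi_\ell/\pi_{\ell-1}$ and with $\pi_\ell,\pi_{\ell-1}$ in place of $\pi,\pi_Y$. By \eqref{eq:Bn} and Lemma~\ref{lem:etak}, the quantity $B_j$ governing \eqref{eq:boundXn} and \eqref{eq:L2fX} at level $\ell$ is, up to the factor $|w_\ell|_\infty\vee|w_\ell|_\infty^2$, bounded by the first- and second-moment Cesàro fluctuations of the empirical measure of $X\topp{\ell-1}$ about $\pi_{\ell-1}$, i.e.\ by a constant multiple of $B_j\topp{\ell-1}$. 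Hence the clean way to organize the argument is to carry the induction hypothesis $B_n\topp{\ell-1}\leq C/n$, whose base case $\ell-1=0$ is precisely Assumption~\ref{A:mIRMCMC}, and to show it is reproduced at level $\ell$.

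Granting $B_n\topp{\ell-1}\leq C/n$, the two conclusions for $X\topp\ell$ follow from Theorem~\ref{thm:IRMCMC}: feeding $B_{j-1}\leq C/j$ into \eqref{eq:boundXn} and using that the weights $(1-\epsilon)^{n-j}$ concentrate near $j=n$ gives $\sum_{j}(1-\epsilon)^{n-j}B_{j-1}=O(1/n)$, hence \eqref{eq:lognm}; and since $\sqrt{B_k}=O(k^{-1/2})$ makes $n^{-1/2}\summ k0{n-1}\sqrt{B_k}=O(1)$, \eqref{eq:L2fX} yields \eqref{eq:varm}. To continue the recursion I must then re-establish $B_n\topp\ell\leq C/n$. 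Its second-moment half is automatic from \eqref{eq:varm}: with $g=f-\pi_\ell(f)$ one has $\esp\pp{\what\pi_{X\topp\ell,n}(g)}^2=n^{-2}\esp\bb{\summ i1n g(X_i\topp\ell)}^2\leq Cn^{-1}$ directly, so this part propagates without loss.

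The main obstacle is the first-moment half, $\sup_{|f|\leq1}\esp\,\what\pi_{X\topp\ell,n}(f-\pi_\ell(f))=n^{-1}\sup_{|f|\leq1}\esp\summ i1n(f(X_i\topp\ell)-\pi_\ell(f))$. Bounding each summand by the pointwise distance $\nnTV{\calL_{X_i\topp\ell}-\pi_\ell}\leq C/i$ and averaging yields only $O(n^{-1}\log n)$, and Example~\ref{rem:2state} shows that this logarithm is not a mere artifact already at a single level, so a naive iteration would accumulate a $\log n$ per level. Keeping the rate at $C/n$ therefore forces me to control the \emph{centered partial sum} $\summ i1n(\calL_{X_i\topp\ell}(f)-\pi_\ell(f))$ directly rather than term by term. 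The tool is the martingale-plus-telescoping decomposition from the proof of Theorem~\ref{thm:IRMCMC}: the martingale has zero mean and the telescoping boundary terms are $O(\epsilon^{-1})$, so everything reduces to bounding $\summ k1n(\eta_{k-1}(f_\epsilon)-\pi_\ell(f_\epsilon))$, where $\eta_{k-1}=\esp\what\theta_{k-1}$ is the expected weighted empirical measure of $X\topp{\ell-1}$. Showing this sum is $O(1)$ — equivalently, that the accumulated bias of the auxiliary empirical measures does not grow with $n$ — is the delicate point on which the whole induction turns, and it is where I would concentrate the effort, tracking exactly how the first-moment bias is transmitted from one level to the next.
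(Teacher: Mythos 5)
Your architecture is the same as the paper's: induct on $\ell$, treat $X^{(\ell-1)}$ as the auxiliary chain for $X^{(\ell)}$, and feed $B_n^{(\ell-1)}\le C/n$ into Theorem~\ref{thm:IRMCMC} to obtain \eqref{eq:lognm} and \eqref{eq:varm} at level $\ell$; your propagation of the second-moment half of $B_n^{(\ell)}$ from \eqref{eq:varm} is correct and is exactly what the paper does. But your proposal is incomplete at the point you yourself flag: the first-moment half, $\sup_{|f|\le 1}\esp\,\what\pi_{X^{(\ell)},n}(\wb f)=\sup_{|f|\le 1}n^{-1}\sum_{i=1}^n\bigl(\esp f(X_i^{(\ell)})-\pi_\ell(f)\bigr)$, is never actually bounded by $C/n$. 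Your candidate fix does not close it either: after the martingale-plus-telescoping decomposition the expected partial sum reduces to $\sum_{k=1}^n\bigl(\eta_{k-1}(f_\epsilon)-\pi_\ell(f_\epsilon)\bigr)$ plus an $O(\epsilon^{-1})$ boundary term, but the only available control on each summand is Lemma~\ref{lem:etak}, namely $|\eta_{k-1}(f_\epsilon)-\pi_\ell(f_\epsilon)|\le C B_{k-1}^{(\ell-1)}\le C/k$, and $\sum_{k\le n}1/k$ reproduces the very logarithm you were trying to avoid. To make the induction close at rate $C/n$ you would need to propagate a strictly stronger hypothesis (summability of the first-moment bias, a bias bound $o(n^{-1})$, or genuine sign cancellation), and you establish none of these.

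That said, you should know that the step you correctly identify as the crux is precisely the step the paper does not justify: its entire proof is the sentence that \eqref{eq:lognm} and \eqref{eq:varm} ``imply'' $B_n^{(\ell)}\le C/n$, followed by induction. As you observe, \eqref{eq:lognm} gives each term of the Ces\`aro average only to order $C/i$, hence the average to order $n^{-1}\log n$, and \eqref{eq:varm} via Cauchy--Schwarz gives only $n^{-1/2}$; neither yields $C/n$ for the first-moment part. Run naively, both your argument and the paper's therefore deliver $\nnTV{\calL_{X_n^{(\ell)}}-\pi_\ell}\le C(\log n)^{\ell-1}/n$ rather than $C/n$ for $\ell\ge 2$. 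So the verdict is: same approach as the paper, a legitimate and well-located obstruction, but an argument that remains open at its decisive step --- in your write-up explicitly, and in the paper's implicitly.
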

\begin{proof}
This follows easily from a repeated application of Theorem \ref{thm:IRMCMC}.
\end{proof}


\nnewpage
\section{Interacting tempering algorithm}\label{sec:EE}
In this section, we consider the interacting tempering algorithm as follows. Recall that the auxiliary chain $\{Y_n\}_{n\geq 0}$ evolves independently from the main chain $\{X_n\}_{n\geq 0}$. 
\begin{Algo}[Interacting Tempering Algorithm]\label{algo:EE}
Fix $\epsilon\in(0,1)$. Start $X_0 = x_0$ and $Y_0 = y_0$. At each round $n$,  generate
\[
X_n \sim\left\{
\begin{array}{ll}
P(X_{n-1},\cdot) & \mbox{ w.p.~} 1-\epsilon\\
K_{\what\pi_{Y,n-1}}(X_{n-1},\cdot) & \mbox{ w.p.~} \epsilon
\end{array}
\right.\,,
\]
where $\what\theta_n = \what\pi_{Y,n}$ is the empirical measure associated to $\{Y_n\}_{n\geq 0}$ and $K_\theta$ is defined by 
\[
K_\theta(x,A)=\textbf{1}_A(x) + \int_{\calX}\left(1\wedge\frac{\pi(z)\pi_Y(x)}{\pi(x)\pi_Y(z)}\right)\left(\textbf{1}_A(z)-\textbf{1}_A(x)\right)\theta(\d z).
\]
In other words, 
for all non-negative functions $h:\calX\to\R$ and $n\in\N$,
\begin{equation}\label{dynEE}
\PE_{x}\left(h(X_{n+1})\mid \F_{n}\right)=P_{\what\pi_{Y,n}}h(X_{n}) \;\;\mbox{ almost surely,}
\end{equation}
where for any probability measure $\theta$ on $\calX$, $P_\theta$ is defined as 
\equh\label{eq:EE}
P_\theta(x,A)=(1-\epsilon)P(x,A)+\epsilon K_\theta(x,A),
\eque
Recall that we write $\pi(\d x) \equiv \pi(x)\d x$ and similarly for $\pi_Y$ with a little abuse of language, and $w(x) = \pi(x)/\pi_Y(x)$. We assume $|w|_\infty<\infty$. 
\end{Algo}

The kernel $K_{\pi_Y}$ is the Independent Metropolis kernel with target $\pi$ and proposal $\pi_Y$. It is well known that under the assumption $|w|_\infty<\infty$ (recall Remark~\ref{rem:w}), the kernel $K_{\pi_Y}$ is uniformly ergodic \citep{mengersen96rates}, and this property is inherited by $P_{\pi_Y}$. That is, there exist $C_0<\infty,\; \rho\in (0,1)$, such that
\begin{equation}\label{eq:rateconv}
  \nnTV{P^n_{\pi_Y}(x,\cdot) - \pi(\cdot)}\leq C_0 \rho^n,\;\;\;n\geq 0.
\end{equation}

\subsection{Convergence rate of IT algorithm}\label{sec:EErate}
We make the following assumptions.

\assumpH
\item \label{A:subGaussian}
There exist a finite universal  constant $C$ such that for any measurable function $f:\;\calX\to\rset$,  with $|f|_\infty\leq 1$, 
\[
\sup_{n}\PP\left(\left|\frac{1}{\sqrt{n}}\summ j1n \pp{f(Y_j) - \pi_Y(f)}\right|>x\right)  \leq C\exp\left(-\frac{x^2}{C\sigma^2(f)}\right),
\]
where $\sigma^2(f)\eqdef \var_{\pi_Y}(f)$.
\assumpE

\assumpH
\item \label{A:w} The function $w:\;\calX\to\rset$ is continuous (with respect to the metric on $\calX$),  and
\equh\label{eq:w}
\sup_{x\in\calX}\frac{\phi(x)}{w^2(x)}<\infty,
\eque
where $\phi(x)\eqdef \pi_Y\left(\{z:\; w(z)\leq w(x)\}\right)$.
\assumpE

\assumpH
\item \label{A:continuous} The kernel $P$ is such that if $f:\calX\to\rset$ is continuous, then $Pf$ is also continuous.
\assumpE

\begin{Rem} The deviation bound \ref{A:subGaussian} appears naturally
in the proof although this type of bounds are not widely available for
Markov chains. A continuous time version appeared in  \citet[Proposition
1.2]{cattiaux08deviation} but extension to discrete time Markov chains
along the same arguments is apparently not straightforward.
\end{Rem}
\begin{Rem}
Assumption~\ref{A:w} can be difficult to check in practice, but is not overly restrictive. For example, consider $\calX = \R$ and $\pi_Y = \pi^T$ with some $T\in(0,1)$. For the sake of simplicity, we focus on $x\in\R_+$ and define $\phi_+(x) \eqdef \pi_Y(\{z>0:w(z)\leq w(x)\})$. Suppose the density $\pi(x)$ decays asymptotically as $x^{-\alpha}$ for $\alpha>1$ as $x\to\infty$. Then, $\pi_Y(x)\sim x^{-T\alpha}$ and $w(x)\sim x^{(T-1)\alpha}$. Here and below, we write $a(x)\sim b(x)$ if $\lim_{x\to\infty}a(x)/b(x) = 1$.  Assume further that $T\alpha>1$. Then, $\phi_+(x)\sim (T\alpha-1)\inv x^{1-T\alpha}$ and 
$\frac{\phi_+(x)}{w^2(x)} \sim\frac1{T\alpha-1}x^{1+2\alpha-3T\alpha}$.
Therefore,~\eqref{eq:w} holds, if $T>(1+2\alpha)/(3\alpha)$. 
\end{Rem}

\begin{Thm}\label{thm:EE}
Consider the IT algorithm described as above and suppose that Assumptions \ref{A:subGaussian}--\ref{A:continuous} hold. Then, there exists a constant $C$, such that for all continuous bounded functions $f:\calX\to\R$ and $n\in\N$,
\equh\label{eq:thm:EE}
\left|\esp\left(f(X_n)-\pi(f)\right)\right|\leq \frac{C|f|_\infty}{\sqrt{n}}.
\eque
\end{Thm}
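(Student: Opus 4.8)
The plan is to treat the true dynamics as a perturbation of the idealised kernel $P_{\theta_\star}$ (with $\theta_\star=\pi_Y$), exploiting its uniform ergodicity~\eqref{eq:rateconv}, the perturbation being governed by the fluctuations of $\what\theta_k=\what\pi_{Y,k}$ around $\pi_Y$. Since the auxiliary chain evolves autonomously, I would first condition on $\calG=\sigma(Y_0,Y_1,\dots)$: given $\calG$ the measures $\what\theta_0,\what\theta_1,\dots$ are frozen, $\{X_n\}$ is an inhomogeneous Markov chain with kernels $P_{\what\theta_0},\dots,P_{\what\theta_{n-1}}$, and $\esp(f(X_n)\mid\calG)=\delta_{x_0}P_{\what\theta_0}\cdots P_{\what\theta_{n-1}}f$. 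Replacing the kernels by $P_{\theta_\star}$ one factor at a time (a telescoping over the product) and writing $h_k=P_{\theta_\star}^{\,n-1-k}f-\pi(f)$, this gives
\[
\esp f(X_n)-\pi(f)=\bpp{P_{\theta_\star}^{\,n}f(x_0)-\pi(f)}+\summ k0{n-1}\esp\bb{(P_{\what\theta_k}-P_{\theta_\star})h_k(X_k)},
\]
where I used that $P_{\what\theta_k}-P_{\theta_\star}$ annihilates constants. The first term is $O(\rho^n)\,|f|_\infty$ by~\eqref{eq:rateconv}, and by the same estimate $|h_k|_\infty\leq C\rho^{\,n-1-k}|f|_\infty$.

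Because $P_\theta=(1-\epsilon)P+\epsilon K_\theta$ and only $K_\theta$ depends on $\theta$, each summand reduces to a fluctuation of the empirical measure tested against an acceptance-weighted function:
\[
(P_{\what\theta_k}-P_{\theta_\star})h_k(x)=\epsilon\,(\what\pi_{Y,k}-\pi_Y)(g^{h_k}_x),\qquad g^h_x(z)=\pp{1\wedge\frac{w(z)}{w(x)}}\bpp{h(z)-h(x)}.
\]
Since, given $\calG$, $X_k$ follows a probability law while $(\what\pi_{Y,k}-\pi_Y)$ is $\calG$-measurable, the integral over the $X_k$-law is dominated by a supremum, so that
\[
\abs{\esp\bb{(P_{\what\theta_k}-P_{\theta_\star})h_k(X_k)}}\leq \epsilon\,\esp\bb{\sup_{x\in\calX}\abs{(\what\pi_{Y,k}-\pi_Y)(g^{h_k}_x)}}.
\]
If one can establish the uniform bound $\esp\,\sup_x|(\what\pi_{Y,k}-\pi_Y)(g^{h_k}_x)|\leq C|h_k|_\infty/\sqrt k$, then plugging in $|h_k|_\infty\leq C\rho^{\,n-1-k}|f|_\infty$ and summing the series $\summ k1{n-1}\rho^{\,n-1-k}k^{-1/2}=O(n^{-1/2})$ yields~\eqref{eq:thm:EE}; the finitely many small-$k$ terms (including $k=0$, where $\what\theta_0=\delta_{y_0}$) are absorbed by the crude bound $\abs{(P_{\what\theta_k}-P_{\theta_\star})h_k}_\infty\leq 2\epsilon|h_k|_\infty$, which carries the same geometric weight.

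The main obstacle is precisely this uniform deviation estimate over the $x$-indexed family $\{g^{h_k}_x\}$, which forces the passage from the pointwise hypothesis~\ref{A:subGaussian} to control of a supremum. I would reduce it using $1\wedge w(z)/w(x)=\int_0^1 \ind\{w(z)>t\,w(x)\}\,\d t$, so that everything is expressed through the centered empirical averages $(\what\pi_{Y,k}-\pi_Y)(\ind\{w>s\}\,h_k)$ and $(\what\pi_{Y,k}-\pi_Y)(\ind\{w>s\})$, uniformly over thresholds $s$ — a Kolmogorov--Smirnov-type statistic for the law of $w(Y)$ under the dependent sequence $\{Y_n\}$. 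Assumption~\ref{A:subGaussian} supplies the per-threshold sub-Gaussian control with variance proxy $\sigma^2$, and a peeling/net argument over the one-dimensional level $s$ upgrades it to the uniform bound; Assumption~\ref{A:w}, i.e. $\pi_Y(\{w\leq w(x)\})\leq C\,w^2(x)$, is what keeps the $\pi_Y$-variances of these acceptance-weighted test functions controlled across levels, so that the peeling sum converges and produces the clean $k^{-1/2}$ rate without spurious logarithmic losses. Finally, Assumption~\ref{A:continuous} (together with continuity of $w$) guarantees that every iterate $P_{\theta_\star}^{\,j}f$, hence $h_k$ and $g^{h_k}_x$, remains continuous; this is exactly what confines the whole argument to continuous test functions and explains why the resulting estimate holds in a norm weaker than total variation.
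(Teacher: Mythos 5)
Your outer argument is essentially the paper's. The paper performs the same telescoping of the inhomogeneous product against powers of $P_{\theta_\star}$, writing the error as $\sum_j \PE_x[P_{\theta_\star}^{n-j}f(X_j)-P_{\what\theta_j}P_{\theta_\star}^{n-j-1}f(X_j)]$ (it restricts the sum to $j\geq q$ with $q\approx n-O(\log n)$ and kills the earlier indices by uniform ergodicity, whereas you sum over all $k$ and absorb the small-$k$ terms by the geometric weight $\rho^{n-1-k}$; both work and give the same $n^{-1/2}$). It then reduces each summand, exactly as you do, to $\epsilon$ times the empirical fluctuation $(\what\pi_{Y,j}-\pi_Y)$ tested against $H_g(x,z)=(1\wedge w(z)/w(x))(g(z)-g(x))$, and bounds it by $j^{-1/2}\,\PE_x\sup_{h\in\F_g}|\G_j(h)|$. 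The entire technical weight therefore sits in the uniform bound $\PE_x\sup_{h\in\F_g}|\G_j(h)|\leq C$ (the paper's Lemma~\ref{lem2}), which you correctly identify as the main obstacle but leave as a sketch. There your route genuinely differs: the paper applies generic chaining (van der Vaart--Wellner, Corollary 2.2.8) directly to the $x$-indexed class $\F_g$, controlling the packing number through the Lipschitz estimate $\int|\alpha(x_1,z)-\alpha(x_2,z)|^2\pi_Y(\d z)\leq(\phi(x_1)/w(x_1)^2+\phi(x_2)/w(x_2)^2)(w(x_2)-w(x_1))^2$ --- and this is precisely, and only, where Assumption~\ref{A:w} is consumed --- whereas you propose the layer-cake identity $1\wedge w(z)/w(x)=\int_0^1\ind\{w(z)>tw(x)\}\,\d t$ to dominate the supremum over $x$ by a Kolmogorov--Smirnov statistic over the scalar threshold $s$. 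That reduction is legitimate and arguably cleaner: the increments $\ind\{s_1<w\leq s_2\}$ have $\pi_Y$-variance $\pi_Y(s_1<w\leq s_2)$, so the packing number in the induced metric is $O(\epsilon^{-2})$ and the entropy integral converges with no reference to $\phi/w^2$.

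Two caveats you should address. First, your stated role for Assumption~\ref{A:w} (``keeping the variances controlled across levels'') matches neither proof: in the paper it controls the modulus of continuity of $x\mapsto H_g(x,\cdot)$ in $L^2(\pi_Y)$, and in your own reduction it appears not to be needed at all --- either confirm that your argument dispenses with it (which would be a genuine improvement) or explain where it actually enters. Second, the chaining over $s$ under Assumption~\ref{A:subGaussian}, together with the separability/measurability of the supremum (which the paper secures via continuity of $w$ and $g$ and the Polish structure of $\calX$, and which is the real reason the theorem is stated for continuous $f$ only), still has to be written out; as it stands the key lemma is a plausible plan rather than a proof.
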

\begin{proof}
Fix $n\geq 2$ and $1\leq q\leq n$. Fix $f:\;\calX\to\rset$ with $|f|_\infty= 1$. Then write
\begin{multline*}
\PE_xf(X_n)-P_{\pi_Y}^nf(x)\\
=\PE_x\left(P_{\pi_Y}^{n-q}f(X_q)-P_{\pi_Y}^nf(x)\right)-\PE_x\left(P_{\pi_Y}^{n-q}f(X_q)-f(X_n)\right).
\end{multline*}
For the first term we can use \eqref{eq:rateconv} to get:
$|\PE_x(P_{\pi_Y}^{n-q}f(X_q)-P_{\pi_Y}^nf(x))|\leq C \rho^{n-q}$,
for some finite constant $C$ that does not depend on $f$. For the second term, we write:
\eqnhn
& & \PE_x\left(P_{\pi_Y}^{n-q}f(X_q)-f(X_n)\right)\nonumber\\
& = & \PE_x\left[\sum_{j=q}^{n-1}\left(P_{\pi_Y}^{n-j}f(X_j)-P_{\pi_Y}^{n-j-1}f(X_{j+1})\right)\right]\nonumber\\
& = & \sum_{j=q}^{n-1}\PE_x\left[P_{\pi_Y}^{n-j}f(X_j)-\PE_x\left(P_{\pi_Y}^{n-j-1}f(X_{j+1})\mid \F_j\right)\right]\nonumber\\
& = & \sum_{j=q}^{n-1}\PE_x\left[P_{\pi_Y}^{n-j}f(X_j)-P_{\what\pi_{Y,j}}P_{\pi_Y}^{n-j-1}f(X_{j})\right]\nonumber\\
& = &\sum_{j=q}^{n-1}C_0 \rho^{n-j-1}\PE_x\left[\left(P_{\pi_Y}-P_{\what\pi_{Y,j}}\right)\zeta_{n,j}(X_j)\right],\label{eq:Ex}
\eqnen
where in the last line we write 
\[
\zeta_{n,j}(x)=\frac{P_{\pi_Y}^{n-j-1}(f(x)-\pi_Y(f))}{C_0\rho^{n-j-1}},\;\;x\in\calX\,,
\]
with $C_0$ and $\rho$ chosen as in~\eqref{eq:rateconv}.
As a consequence of~\eqref{eq:rateconv}, $|\zeta_{n,j}|_\infty\leq 1$. It is also continuous by the continuity of $f$ and Assumption~\ref{A:continuous}.

To simplify the notation, for any function $g:\;\calX\to \rset$, define

\begin{equation}\label{eq:Hg}
H_g(x,z)\eqdef\alpha(x,z)\left(g(z)-g(x)\right),\quad x,z\in\calX,\end{equation}
where 
\equh\label{eq:alpha}
\alpha(x,z)\eqdef 1\wedge \frac{w(z)}{w(x)}.
\eque
 Thus, we can write
\[
P_\theta g(x)-P_{\pi_Y}g(x)=\epsilon\int H_g(x,z)(\theta(\d z)-\pi_Y(\d z)).
\]
For any $g:\calX\to\R$, we introduce the class of functions
$\filF_g\eqdef\ccbb{z\mapsto H_g(x,z):\;x\in\calX}$,
and the empirical process 
\[\G_n(h)\eqdef \frac{1}{\sqrt{n}}\sum_{j=1}^n \left(h(Y_j)-\pi_Y(h)\right),\;\;\; h\in\filF_g.
\]
Therefore, the expectation term in~\eqref{eq:Ex} becomes
\begin{multline*}
 \esp_x\bb{\bbpp{P_{\pi_Y}-P_{\what\pi_{Y,j}}}\zeta_{n,j}(X_j)}
= \epsilon\esp_x\bb{\int H_{\zeta_{n,j}}(X_j,z)(\pi_Y(\d z)-\what\pi_{Y,j}(\d z))}\\
= -\epsilon\esp_x\bb{\frac1j\summ \ell1j H_{\zeta_{n,j}}(X_j,Y_\ell) - \int_\calX H_{\zeta_{n,j}}(X_j,z)\pi_Y(\d z)}\\
=  -\frac{\epsilon}{\sqrt{j}}\esp_x\bb{\G_j\bbpp{H_{\zeta_{n,j}}(X_j,\cdot)}}\,,
\end{multline*}
whence
\eqnh
\left|\PE_x\left(P_{\pi_Y}^{n-q}f(X_q)-f(X_n)\right)\right| & = & \abs{\epsilon\sum_{j=q}^{n-1}\frac{C_0\rho^{n-j-1}}{\sqrt{j}}\esp_x\bb{\G_j\bbpp{H_{\zeta_{n,j}}(X_j,\cdot)}}}\\
 & \leq & C_0\sum_{j=q}^{n-1}\frac{\rho^{n-j-1}}{\sqrt{j}}\PE_x\left(\sup_{h\in \F_{\zeta_{n,j}}}\left|\G_j(h)\right|\right).
\eqne
We prove in Lemma \ref{lem2} below that for any continuous function $g:\calX\to\rset$ such that $|g|_\infty\leq 1$, 
$\PE_x\left(\sup_{h\in \F_{g}}\left|\G_n(h)\right|\right)\leq C$,
for some constant $C$ that does not depend on $n$ nor $g$. We conclude that
$|\PE_x(P_{\pi_Y}^{n-q}f(X_q)-f(X_n))|\leq C\sum_{j=q}^{n-1}\frac{1}{\sqrt{j}}\rho^{n-j-1}$.
Thus for any $1\leq q\leq n$,
\[
\left|\PE_x\left(f(X_n)\right)-\pi_Y(f)\right|\leq C \left\{\rho^n + \rho^{n-q} + \epsilon \sum_{j=q}^{n-1}\frac{\rho^{n-j-1}}{\sqrt{j}}\right\}\leq Cn^{-1/2},
\]
by choosing $q=n-\lfloor\frac{-\log n}{2\log\rho}\rfloor$.
\end{proof}

We rely on the following technical result on the auxiliary chain $\{Y_n\}_{n\geq 0}$. 

\begin{Lem}\label{lem2}Suppose that Assumptions \ref{A:subGaussian} and \ref{A:w} hold. Then there exists a constant $C$ such that for all continuous function $g:\calX\to\rset$ such that $|g|_\infty\leq 1$,
\[
\sup_{n\in\N}\PE_x\left(\sup_{h\in \F_g}\left|\G_n(h)\right|\right)\leq C.
\]
\end{Lem}
\begin{proof}
Throughout the proof $n\geq 1$ is fixed. Assumption \ref{A:subGaussian} suggests the following metric on $\F_g$: 
\[\textsf{d}(h_1,h_2)=\sigma(h_1-h_2)=\left(\int_\calX\left(h_1(x)-h_2(x)\right)^2\pi_Y(\d x)\right)^{1/2},\]
which has the following properties.
For  $x_1,x_2\in\calX$, it is easy to check that 
\begin{equation}\label{dist1}
\left |H_g(x_1,z)-H_g(x_2,z)\right|\leq 2\left|\alpha(x_1,z)-\alpha(x_2,z)\right| + \left|g(x_1)-g(x_2)\right|.\end{equation}
It follows that
\begin{multline}\label{dist2}
\textsf{d}\left(H_g(x_1,\cdot),H_g(x_2,\cdot)\right)\\
\leq \sqrt 2\left|g(x_1)-g(x_2)\right| 
+ 2\sqrt2\sqrt{\int\left|\alpha(x_1,z)-\alpha(x_2,z)\right|^2\pi_Y(\d z)}.\end{multline}
This implies that the diameter of $\F_g$ is bounded by $\delta(\F_g)=4\sqrt{2}$.  It also implies that with respect to $\textsf{d}$, the empirical process $\{\G_n(h),\;h\in \F_g\}$ is separable. Indeed, for $x\in\calX$ arbitrary and $h=H_g(x,\cdot)$, using the Polish assumption, we can find a sequence $x_m\in\calX$ ($x_m$ belongs to a countable subset of $\calX$) such that $x_m\to x$, as $m\to\infty$. Setting $h_m=H_g(x_m,\cdot)$, it follows from (\ref{dist2}) and the continuity of $g$ and $w$ that $h_m\to h$ in $(\F_g,\textsf{d})$, and (\ref{dist1}) easily show that $\G_n(h_m)-\G_n(h)=n^{-1/2}\sum_{\ell=1}^n\left(H_g(x,Y_\ell)-H_g(x_m,Y_\ell)\right)+\sqrt{n}\pi_Y\left(H_g(x,\cdot)-H_g(x_m,\cdot)\right)\to 0$ as $m\to\infty$ for all realizations of $\{Y_1,\ldots,Y_n\}$.

For any  $h_1,h_2\in\F_{g}$, Assumption \ref{A:subGaussian} implies that for any $x>0$
\[
\PP_x\left(\left|\G_n(h_1)-\G_n(h_2)\right|>x\right)\leq C\exp\left(-\frac{x^2}{c\textsf{d}^2(h_1,h_2)}\right).
\]
Here, the constant $C$ above is universal for all $g$ such that $|g|_\infty\leq 1$: indeed,~\eqref{eq:Hg} implies that for such a function $g$, $h\in\filF_g$ implies $|h|_\infty\leq 2$.
Then we apply \citet[Corollary 2.2.8]{vandervaart96weak} to conclude that for $h_{0,g}\in\F_g$, there exists a constant $C$ independent of $g$, such that
\[
\PE_x\left(\sup_{h\in \F_{g}}\left|\G_n(h)\right|\right)\leq \PE_x|\G_n(h_{0,g})| + C\int_0^{\delta(\F_g)} \sqrt{1+\log\textsf{D}(\epsilon,\F_g,\textsf{d})}\;\d\epsilon<\infty,\]
where $\textsf{D}(\epsilon,\F_g,\textsf{d})$ is the packing number of $\F_g$ with respect to $\textsf{d}$. Since all elements of $\F_g$ have a sup-norm of at most two, Assumption \ref{A:subGaussian} implies that $\sup_{n\in\N}\PE_x|\G_n(h_{0,g})|\leq C<\infty$, where $C$ does not depend on $g$. To control the entropy number, we further bound the right hand of (\ref{dist2}).

Without loss of generality, assume $x_1,x_2\in\calX$ and $w(x_1)<w(x_2)$.  If $w(x_1)\vee w(x_2)\leq w(z)$, then $\alpha(x_1,z)-\alpha(x_2,z)=0$.  If $w(z)\leq w(x_1)$, then 
\[
\left|\alpha(x_1,z)-\alpha(x_2,z)\right|^2=\left|\frac{w(z)}{w(x_1)} - \frac{w(z)}{w(x_2)}\right|^2\leq \frac{1}{w(x_1)^2}\left(w(x_2)-w(x_1)\right)^2.
\]
If $w(x_1)\leq w(z)\leq w(x_2)$, then
\[\left|\alpha(x_1,z)-\alpha(x_2,z)\right|^2=\left|1- \frac{w(z)}{w(x_2)}\right|^2
\leq \frac{1}{w(x_2)^2}\left(w(x_2)-w(x_1)\right)^2.\]

Thus
\begin{multline*}
\int\left|\alpha(x_1,z)-\alpha(x_2,z)\right|^2\pi_Y(\d z)\\
\leq \pp{\frac{\phi(x_1)}{w(x_1)^2} 
 + \frac{\phi(x_2)}{w(x_2)^2}}\left(w(x_2)-w(x_1)\right)^2\leq C\left(w(x_2)-w(x_1)\right)^2,
\end{multline*}
where $\phi(x)\eqdef \pi_Y\left(\{z:\; w(z)\leq w(x)\}\right)$, and the last inequality follows from \ref{A:w}. 
Together with (\ref{dist2}), we conclude from this bound that  there exists a constant $C_0$ independent of $g$ such that
\begin{equation}\label{lip}
\textsf{d}\left(H_g(x_1,\cdot),H_g(x_2,\cdot)\right)
\leq C_0 (\left|g(x_1)-g(x_2)\right|+ \left|w(x_2)-w(x_1)\right|).
\end{equation}
Since $|g|_\infty\leq 1$ and $w(x)\in [0,|w|_\infty]$, this implies that the $\epsilon$-packing number of $(\F_g,\textsf{d})$ is at most of order  $\epsilon^{-2}$, independent of $g$. A detailed proof is provided below. It then follows that $\int_0^{\delta(\F_g)} \sqrt{1+\log\textsf{D}(\epsilon,\F_g,\textsf{d})}\;\d\epsilon\leq C \int_0^{\delta(\F_g)}\sqrt{1+\log(1/\epsilon)}d\epsilon<\infty$, which proves the lemma.

To complete the proof we show that the $\epsilon$-packing number of $(\filF_g,\textsf{d})$ is at most of order $\epsilon^{-2}$, independent of $g$. That is, the cardinality of any {\it $\epsilon$-separate set} is at most of order $\epsilon^{-2}$ (recall that a set is an $\epsilon$-separate set if any two points of this set have distance larger than $\epsilon$). Notice that the functions in $\filF_g$ are indexed by $x\in\calX$.

First, one can divide the set $\calX$ into $N = \floor{2/\epsilon}+1$ disjoint subsets $S(1),\dots,S(N)$, so that for every two points $x,y$ within the same $S(i)$, $|g(x)-g(y)|<\epsilon$. Notice that $N$ does not depend on $g$. 
For example, consider $g\inv([-1,-1+\epsilon]), g\inv((-1+\epsilon, -1+2\epsilon]), \dots, g\inv((-1+(N-1)\epsilon,1])$. 

Second, for each set $S(i)$, one can again divide it into $N' = \floor{|w|_\infty/\epsilon} + 1$ disjoint subsets, denoted by $S(i,j), j=1,\dots,N'$, so that within each $S(i,j)$, for every two points $x,y$, $|w(x)-w(y)|<\epsilon$.

Finally,  $\{S(i,j)\}_{i=1,\dots,N, j=1,\dots,N'}$ form a disjoint partition of $\calX$. The construction and~\eqref{lip} requires that any  $2C_0\epsilon$-separate set contains at most one point in each $S(i,j)$. Therefore, the $\epsilon$-packing number is at most of order $1/\epsilon^2$. 
\end{proof}

\bibliographystyle{apalike}
\bibliography{references}

\end{document}